\documentclass[letterpaper,twoside,web]{ieeecolor}

\usepackage{generic}
\usepackage{cite}
\usepackage{amsmath,amssymb,amsfonts}
\usepackage{graphicx}
\usepackage{hyperref}
\hypersetup{hidelinks}
\usepackage{textcomp}
\usepackage{eso-pic}
\usepackage{lcsys}

\def\BibTeX{{\rm B\kern-.05em{\sc i\kern-.025em b}\kern-.08em
    T\kern-.1667em\lower.7ex\hbox{E}\kern-.125emX}}
\newtheorem{assumption}{Assumption}[section]
\newtheorem{definition}{Definition}[section]

\newtheorem{theorem}{Theorem}[section]
\newtheorem{proposition}{Proposition}[section]

\newtheorem{problem}{Problem}[section]

\newcommand{\R}{\mathbb{R}}
\newcommand{\Z}{\mathbb{Z}}
\newcommand{\RH}{\mathcal{RH}}
\newcommand{\He}{\mathcal{H}}
\newcommand{\st}{\text{s.t.}}
\newcommand{\ind}[1]{\mathbf{1}_{\{#1\}}}
\newcommand{\Tlat}{T_{\mathrm{lat}}}
\newcommand{\TFIR}{T_{\mathrm{FIR}}}
\newcommand{\Rp}{\mathcal{R}_{\mathrm p}}
\newcommand{\Aadm}{\mathcal{A}_{\mathrm{adm}}}
\newcommand{\BadmT}{\mathcal{B}_{\mathrm{adm}}^{\Tlat}}

\DeclareMathOperator{\col}{col}
\DeclareMathOperator{\Tr}{Tr}
\begin{document}
\AddToShipoutPictureFG*{%
  \AtPageLowerLeft{%
    \raisebox{0.18in}[0pt][0pt]{%
      \makebox[\paperwidth][c]{%
        \parbox{0.94\paperwidth}{\centering\scriptsize
        This work has been submitted to the IEEE for possible publication.
        Copyright may be transferred without notice, after which this version
        may no longer be accessible.}}}}}
\title{Deployment-Aware Controller and Control Architecture Co-Design via Mixed-Integer Output-Feedback SLS}
\author{Chenchen Zhou and Jose Matias
\thanks{The postdoctoral fellowship of C. Zhou was funded by KU Leuven through project ZKE5508.}
\thanks{Chenchen Zhou and Jose Matias are with KU Leuven, Chemical and Biochemical Reactor Engineering and Safety (CREaS), De Nayer Campus, Jan Pieter de Nayerlaan 5, 2860 Sint-Katelijne-Waver, Belgium (e-mail: chenchen.zhou@kuleuven.be; jose.matias@kuleuven.be).}
\thanks{OpenAI's ChatGPT was used only for language polishing and spelling and grammar checks.}}

\maketitle
\thispagestyle{empty}

\begin{abstract}
We study controller and control-architecture co-design for output-feedback
systems under a hard budget. The architecture activates sensors and actuators
and selects among directed communication-service options with specified
delivery bounds and costs. Direct optimization over controller transfer
matrices and discrete deployments is mixed-integer nonconvex; convex
alternatives fix the architecture, use regularization, or impose a
quadratically invariant (QI) controller-information pattern. We instead
optimize finite impulse response (FIR) output-feedback system-level synthesis
(OF-SLS) responses. Binary variables select devices and service options;
cumulative binaries record whether the selected service can deliver by each FIR
lag. Indicator constraints zero response coefficients that would require
unavailable messages. For fixed device and OF-SLS realization-state locations,
this yields an exact mixed-integer convex program (MICP) over finite service
menus and deployment constraints. Every feasible response admits the standard
OF-SLS implementation using only the selected devices and services. In a
three-follower platoon, 2736 of 139,968 stabilizable and detectable deployments
are QI-compatible. All three actuators are necessary, whereas
intermediate-budget optima retain strict subsets of seven sensor packages. At a
common budget, the best QI design has 3.86 times the performance loss of the
co-design optimum relative to the dense deployment.
\end{abstract}

\begin{IEEEkeywords}
optimal control, structured control, architecture co-design, mixed-integer programming, system-level synthesis
\end{IEEEkeywords}

\section{Introduction}
\label{sec:introduction}
\IEEEPARstart{C}{yber-physical} control systems couple physical dynamics with
sensing, actuation, communication, computation, and software modules. Their
architecture is therefore part of the control design: the designer must decide
which sensors and actuators are installed and which sensor measurements reach
each actuator-side controller component at which delay. These decisions are
especially visible in networked systems such as vehicle platoons, power
networks, and multi-agent systems. The architecture determines which
disturbance information can reach which actuator before the disturbance
propagates through the physical interconnection. Missing or slow links force
the controller to react through delayed information paths, so even the best
controller for the fixed architecture can incur a larger closed-loop cost.

Existing tools cover fixed or convexifiable architectures. Quadratic invariance
(QI) requires the plant-induced feedback transformation to preserve a
controller-information subspace and yields a convex Youla formulation;
sparsity invariance gives related convex restrictions
\cite{rotkowitz2005,furieriSparsityInvariance2020}. System-level synthesis
(SLS), Youla, and
input--output parameterizations equivalently describe stabilizing controllers;
SLS uses achievable closed-loop responses
\cite{wangSystemLevelApproach2019,zhengEquivalenceYoulaSystemLevel2021}.
For a fixed architecture, affine output-feedback SLS (OF-SLS) constraints
retain convexity and may induce non-QI controller support. Localized synthesis
and decay enable distributed design \cite{wang2018b,shinNearOptimalDistributed2023},
whereas deployment analysis maps controllers to realization resources
\cite{tsengAndersonSynthesisDeployment2020}. These concern distinct controller
support, response support, realization, and deployment structures.

With these tools available, the remaining question is how to choose hardware,
information links, and delays under cost and design rules. Related work treats
structural and joint sensor--control-node selection
\cite{pequitoStructuralConfiguration2016,nugrohoJointSensorControl2019},
communication--compute--control interactions
\cite{grosjeanFrameworkCommunicationComputeControl2025}, and sparse architectures
through regularization for design (RFD)
\cite{matniRegularizationDesign2016,matniCommunicationDelayCoDesign2017}.
Recent work uses evolutionary pruning for actuator--sensor--communication
co-design \cite{wuEvolutionaryActuatorSensorCommunication2026}, but does not
couple finite delay-rated end-to-end service menus to a
deployment-certified OF-SLS realization under a hard budget.
Here, exact budgets, finite delay-rated services, compatibility rules, and a
realization certificate are imposed together.

To close this gap, the paper couples finite hardware and directed-service
decisions to finite impulse response (FIR) OF-SLS responses. Fixed candidate
locations, binary selections, cumulative delivery variables, and indicator
gates yield an exact finite mixed-integer convex program (MICP). Within the
prescribed locations, service menus, FIR horizon, and deployment constraints,
a zero-gap solve certifies the best architecture-response pair; each feasible
response has a standard OF-SLS implementation using selected resources. A
vehicle-platoon study compares these optima with uniformly re-synthesized QI,
standard-pattern, and RFD architectures under common hard budgets.

The remainder formulates the problem (Section~\ref{sec:problem_formulation}),
derives the MICP and QI reference (Section~\ref{sec:binary_sls_reformulation}),
presents the numerical study (Section~\ref{sec:numerics}), and concludes
(Section~\ref{sec:discussion}).

\section*{Notation}
For integers \(a\le b\), let
\(\Z_{[a,b]}:=\{a,a+1,\ldots,b\}\) and
\(\Z_{\ge0}:=\{0,1,\ldots\}\). For
\(F(z)=\sum_{t\ge0}F[t]z^{-t}\), \(F[t]\) denotes its \(t\)-th impulse
coefficient. The spaces of proper stable and proper real-rational transfer
matrices are denoted by \(\RH_\infty\) and \(\Rp\), respectively, and
\(\ind{\mathsf{condition}}\) equals one when the condition holds and zero
otherwise. Delay
values use the usual extended order and arithmetic on
\(\Z_{\ge0}\cup\{\infty\}\).

\section{Information-Latency Architecture Co-Design Problem}
\label{sec:problem_formulation}
\subsection{Output-Feedback SLS Parameterization and Performance}
Consider the discrete-time linear time-invariant (LTI) generalized plant
\begin{equation}
\label{eq:generalized_plant}
\begin{aligned}
x(t+1) &= A x(t) + B_1 w(t) + B_2 u(t), \\
\zeta(t)   &= C_1 x(t) + D_{11} w(t) + D_{12} u(t), \\
y(t)   &= C_2 x(t) + D_{21} w(t) + D_{22} u(t),
\end{aligned}
\end{equation}
where $x(t)\in\R^{n_x}$ is the plant state, $w(t)\in\R^{n_w}$ is the disturbance input, $u(t)\in\R^m$ is the control input, $y(t)\in\R^p$ is the measured output, and $\zeta(t)\in\R^{n_\zeta}$ is the regulated output. The control input and measured output are partitioned into actuator and sensor channels:
\begin{equation}
\label{eq:input_output_partition}
\begin{aligned}
u(t) &= \col\big(u_1(t),\dots,u_{n_u}(t)\big),\qquad
u_i(t)\in\R^{m_i}, \\
y(t) &= \col\big(y_1(t),\dots,y_{n_y}(t)\big),\qquad
y_j(t)\in\R^{p_j},
\end{aligned}
\end{equation}
where $\sum_i m_i=m$ and $\sum_j p_j=p$. The plant transfer matrix from $u$ to
$y$ is \(P_{22}(z):=D_{22}+C_2(zI-A)^{-1}B_2\).
\begin{assumption}
\label{ass:plant}
The generalized plant satisfies $D_{22}=0$, $(A,B_2)$ is stabilizable, and $(A,C_2)$ is detectable.
\end{assumption}
Under Assumption~\ref{ass:plant}, \(P_{22}\) is strictly proper. Hence the FIR
class may include the instantaneous response coefficient \(L[0]\) without an
algebraic loop or an additional well-posedness constraint.

Let \(\delta_x:=B_1w\) and \(\delta_y:=D_{21}w\) denote the disturbances
entering the state and measurement equations in \eqref{eq:generalized_plant}.
An OF-SLS system response is the closed-loop map from
\((\delta_x,\delta_y)\) to \((x,u)\):
\begin{equation}
\label{eq:Psi_def}
\begin{bmatrix}
x\\u
\end{bmatrix}
=
\Psi
\begin{bmatrix}
\delta_x\\ \delta_y
\end{bmatrix},
\qquad
\Psi:=
\begin{bmatrix}
R & N \\
M & L
\end{bmatrix},
\end{equation}
where \(R,N,M,L\) have dimensions conforming to
\eqref{eq:input_output_partition}. These blocks are closed-loop responses, not
controller-transfer blocks. We call \(\Psi\) stable and achievable if it
satisfies the stability constraints
\begin{equation}
\label{eq:response_spaces}
\begin{aligned}
R&\in z^{-1}\RH_\infty^{n_x\times n_x},&
M&\in z^{-1}\RH_\infty^{m\times n_x},\\
N&\in z^{-1}\RH_\infty^{n_x\times p},&
L&\in \RH_\infty^{m\times p}
\end{aligned}
\end{equation}
and the OF-SLS affine constraints
\begin{equation}
\label{eq:ofsls_achievability}
\begin{aligned}
\begin{bmatrix}
zI-A & -B_2
\end{bmatrix}
\begin{bmatrix}
R & N \\
M & L
\end{bmatrix}
&=
\begin{bmatrix}
I & 0
\end{bmatrix}, \\
\begin{bmatrix}
R & N \\
M & L
\end{bmatrix}
\begin{bmatrix}
zI-A \\
-C_2
\end{bmatrix}
&=
\begin{bmatrix}
I \\
0
\end{bmatrix}.
\end{aligned}
\end{equation}

Under Assumption~\ref{ass:plant}, stable response tuples \((R,N,M,L)\) satisfying
\eqref{eq:response_spaces} and both identities in
\eqref{eq:ofsls_achievability} are in one-to-one
correspondence with internally stabilizing output-feedback controllers
\cite{wangSystemLevelApproach2019,wang2018b}. The corresponding dynamic
controller is \(K=L-MR^{-1}N\), and its induced \(L\)-block satisfies
\(L=K(I-P_{22}K)^{-1}\). Support constraints on \(\Psi\) specify the admissible
closed-loop response blocks; the realization messages and deployed resources
are specified below.

For any feasible \(\Psi\), the closed-loop map from \(w\) to \(\zeta\) is
affine:
\begin{equation}
\label{eq:Gzetaw_Psi}
G_{\zeta w}(\Psi)
=
D_{11}
+
\begin{bmatrix}
C_1 & D_{12}
\end{bmatrix}
\begin{bmatrix}
R & N \\
M & L
\end{bmatrix}
\begin{bmatrix}
B_1 \\
D_{21}
\end{bmatrix}.
\end{equation}
For a chosen closed-loop performance functional \(\mathcal J\), define
\(J_{\mathrm{perf}}(\Psi):=\mathcal J(G_{\zeta w}(\Psi))\); it is convex in
\(\Psi\) when \(\mathcal J\) is convex.

Fix an FIR horizon \(\TFIR\). The FIR response class uses
\begin{equation}
\label{eq:FIR_param}
\begin{aligned}
R(z) &= \sum_{t=1}^{\TFIR} R[t] z^{-t}, \qquad
M(z) = \sum_{t=1}^{\TFIR} M[t] z^{-t}, \\
N(z) &= \sum_{t=1}^{\TFIR} N[t] z^{-t}, \qquad
L(z) = \sum_{t=0}^{\TFIR} L[t] z^{-t}.
\end{aligned}
\end{equation}

\begin{definition}[Finite-dimensional FIR response class]
\label{def:fir_class}
The class \(\mathcal F_{\TFIR}\) consists of response tuples of the form
\eqref{eq:FIR_param} satisfying the finite affine SLS equations obtained from
\eqref{eq:ofsls_achievability}. Equivalently, with
\(R[0]=M[0]=N[0]=0\) and
\(R[\TFIR+1]=M[\TFIR+1]=N[\TFIR+1]=0\), the left and right OF-SLS identities
become
\begin{equation}
\label{eq:FIR_recursions}
\begin{aligned}
R[t+1] &= A R[t] + B_2 M[t] + \ind{t=0}I,\\
N[t+1] &= A N[t] + B_2 L[t],\\
R[t+1] &= R[t]A + N[t]C_2 + \ind{t=0}I,\\
M[t+1] &= M[t]A + L[t]C_2,
\end{aligned}
\end{equation}
where $t\in\Z_{[0,\TFIR]}$.
The two \(R\)-updates are the coefficient forms of the left and right OF-SLS identities;
 \(L\) is coupled to the terminal constraints through the \(N\)- and \(M\)-updates.
\end{definition}

Within \(\mathcal F_{\TFIR}\), let \(J_{\mathrm{FIR}}\) denote the
finite-dimensional restriction of \(J_{\mathrm{perf}}\). For the squared
finite-horizon \(\He_2\) criterion,
\begin{equation}
\label{eq:J_FIR}
J_{\mathrm{FIR}}(\Psi)
:=
\sum_{t=0}^{\TFIR}
\Tr\!\big( G_{\zeta w}[t]^\top G_{\zeta w}[t] \big),
\end{equation}
where \(G_{\zeta w}[t]\) is the \(t\)-th impulse coefficient of
\(G_{\zeta w}(\Psi)\). Through the terminal equalities and finite sum,
\(\TFIR\) can affect feasibility, performance, and the selected architecture.
Under the natural zero-padding embedding,
\(\mathcal F_T\subseteq\mathcal F_{T+1}\), so the optimal value for a fixed
deployment is nonincreasing in (T). This nesting alone does not establish
convergence to the unrestricted stable-response optimum or eventual invariance
of the selected architecture.

To connect response constraints to deployable messages, we use the standard
OF-SLS realization available for every achievable \(\Psi\):
\begin{equation}
\label{eq:ofsls_realization}
\begin{aligned}
z\beta&=\widetilde R^+\beta+\widetilde N y,\\
u&=\widetilde M\beta+Ly,
\end{aligned}
\qquad
\begin{aligned}
\widetilde R^+&=z(I-zR),\\
\widetilde M&=zM,\qquad \widetilde N=-zN.
\end{aligned}
\end{equation}
Here \(\beta\in\R^{n_x}\) is the realization state and \(\widetilde R^+\) the
shifted \(R\)-filter. Eliminating \(\beta\) yields
\(K=L-MR^{-1}N\); at realization-filter lag \(q\ge0\),
\begin{equation}
\label{eq:realization_shifts}
\widetilde M[q]=M[q+1],\
\widetilde N[q]=-N[q+1],\
\widetilde R^+[q]=-R[q+2].
\end{equation}

\subsection{Hardware, Communication Services, and Site Assignments}

To assign the OF-SLS realization in \eqref{eq:ofsls_realization} to sites, let
\(\mathcal V\) be a fixed set of deployment sites and partition
\(\beta=\col(\beta_1,\ldots,\beta_{n_\beta})\), where
\(\beta_k\in\R^{n_{\beta,k}}\) and
\(\sum_{k=1}^{n_\beta}n_{\beta,k}=n_x\). The partition is conformal with the
block rows and columns of \(R\), the columns of \(M\), and the rows of \(N\). The
site-assignment maps
\begin{equation}
\label{eq:hosts}
p_u(i)\in\mathcal V,\qquad p_y(j)\in\mathcal V,
\qquad p_\beta(k)\in\mathcal V
\end{equation}
assign actuator \(u_i\), sensor \(y_j\), and realization-state block
\(\beta_k\) to sites. Multiple objects may share a site. These assignments are
specified before co-design and define the deployment class.

Independent binaries
\(\eta_i,\xi_j\in\{0,1\}\) enable actuator \(i\) and sensor \(j\), respectively.

Let \(\Tlat\in\Z_{\ge1}\) denote the maximum selectable communication delay.
For each ordered site pair \((s,r)\), the design selects
\begin{equation}
\label{eq:service_menu}
d_{rs}\in\mathcal D_{rs},\qquad
\mathcal D_{rs}\subseteq\Z_{[0,\Tlat]}\cup\{\infty\}.
\end{equation}
Finite entries need not be consecutive; \(\infty\) denotes no service. Each
finite option specifies an available lossless end-to-end service that delivers
from \(s\) to \(r\) within \(d_{rs}\) samples. A fixed physical delay is
represented by a singleton \(\mathcal D_{rs}\). Sufficient capacity prevents
queueing; early arrivals may be buffered.

Let \(d:=[d_{rs}]_{r,s\in\mathcal V}\),
\(\eta:=\col(\eta_1,\ldots,\eta_{n_u})\), and
\(\xi:=\col(\xi_1,\ldots,\xi_{n_y})\). Device activation is separate from fixed
site computation and memory, so a site may relay messages or maintain its
\(\beta\)-block without activating a colocated device. Let
\(c_i^{\mathrm a},c_j^{\mathrm s}\ge0\) be actuator and sensor
costs, and let \(c_{rs}^{\mathrm c}(\nu)\ge0\) be the one-time cost of service
option \(\nu\), with \(c_{rs}^{\mathrm c}(\infty)=0\).
A selected service may carry multiple realization messages but is charged once per directed
site pair.
Local transfers, computation, and
memory are free; for each site, \(d_{rr}=0\) and \(c_{rr}^{\mathrm c}(0)=0\).
Define
\begin{equation}
\label{eq:architecture_cost}
\begin{aligned}
J_{\mathrm{arch}}(d,\eta,\xi)
=\sum_i c_i^{\mathrm a}\eta_i+\sum_j c_j^{\mathrm s}\xi_j
+\sum_{r,s\in\mathcal V}c_{rs}^{\mathrm c}(d_{rs}).
\end{aligned}
\end{equation}
Let \(\Aadm\) collect the service-option membership and binary-domain constraints,
together with any hard cardinality,
compatibility, mandatory, or forbidden-resource constraints; hence
\((d,\eta,\xi)\in\Aadm\) is an admissible deployment.

The four message families are $y_j\to u_i$ for $L_{ij}$,
$\beta_k\to u_i$ for $\widetilde M_{ik}$,
$y_j\to\beta_k[t+1]$ for $\widetilde N_{kj}$, and
$\beta_\ell\to\beta_k[t+1]$ for $\widetilde R^+_{k\ell}$. Their respective
source--destination site pairs are
\begin{equation}
\label{eq:realization_message_sites}
\begin{split}
p_y(j)&\to p_u(i),\qquad p_\beta(k)\to p_u(i),\\
p_y(j)&\to p_\beta(k),\qquad p_\beta(\ell)\to p_\beta(k).
\end{split}
\end{equation}

The \(L[q]\) and \(\widetilde M[q]\) terms form \(u[t]\), whereas
\(\widetilde N[q]\) and \(\widetilde R^+[q]\) form \(\beta[t+1]\). The shifts
in \eqref{eq:realization_shifts} therefore translate the message deadlines into
the response-coefficient constraints
\begin{equation}
\label{eq:implementation_mask_rule}
\begin{aligned}
L_{ij}[t]&=0&&\text{if }d_{p_u(i),p_y(j)}>t,\\
M_{ik}[t]&=0&&\text{if }d_{p_u(i),p_\beta(k)}>t-1,\\
N_{kj}[t]&=0&&\text{if }d_{p_\beta(k),p_y(j)}>t,\\
R_{k\ell}[t]&=0&&\text{if }d_{p_\beta(k),p_\beta(\ell)}>t-1.
\end{aligned}
\end{equation}
The four rows apply for $t\ge0$, $t\ge1$, $t\ge1$, and $t\ge2$,
respectively. The fixed $R[1]=I$ is the identity self-update and uses no
directed communication service. Device activation imposes the separate gates
\begin{equation}
\label{eq:hardware_gating}
\begin{aligned}
\eta_i=0&\Rightarrow M_{i,:}[t]=L_{i,:}[t]=0,\\
\xi_j=0&\Rightarrow N_{:,j}[t]=L_{:,j}[t]=0.
\end{aligned}
\end{equation}
The \(M\)- and \(N\)-gates hold for \(t\in\Z_{[1,\TFIR]}\), and the
\(L\)-gates hold for \(t\in\Z_{[0,\TFIR]}\).
Define the prescribed deployment-response class
\begin{equation}
\label{eq:implementation_local_class}
\mathcal F_{\TFIR}^{\mathrm{dep}}(d,\eta,\xi)
:=\{\Psi\in\mathcal F_{\TFIR}:\ \Psi\text{ satisfies }
\eqref{eq:implementation_mask_rule}\text{ and }\eqref{eq:hardware_gating}\}.
\end{equation}

Proposition~\ref{prop:deployment} provides an implementation certificate: the
response constraints are realizable using the deployment charged in
\(J_{\mathrm{arch}}\).

\begin{proposition}[Deployment sufficiency]
\label{prop:deployment}
Fix the site-assignment maps \(p_u,p_y,p_\beta\), and assume each selected
directed communication service satisfies its delivery bound \(d_{rs}\) without
loss or queueing. Then every
\(\Psi\in\mathcal F_{\TFIR}^{\mathrm{dep}}(d,\eta,\xi)\) admits a causal
nodewise implementation of \eqref{eq:ofsls_realization} using only the
enabled devices and selected communication services. It realizes
\(K=L-MR^{-1}N\) and the closed-loop responses \(\Psi\).
\end{proposition}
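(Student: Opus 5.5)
The plan is to write out the standard OF-SLS realization \eqref{eq:ofsls_realization} in time-domain coefficient form. Then I check, message by message, that every nonzero term only needs data that is available at the destination site under the selected services and enabled devices. After that, I invoke the known equivalence between achievable responses and this realization, which was cited after \eqref{eq:ofsls_achievability}.

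\textbf{Step 1 (coefficient form).} Using \eqref{eq:realization_shifts} and the FIR structure, the realization reads
\[
\beta[t+1]=\sum_{q\ge0}\big(-R[q+2]\beta[t-q]-N[q+1]y[t-q]\big),\qquad
u[t]=\sum_{q\ge0}\big(M[q+1]\beta[t-q]+L[q]y[t-q]\big).
\]
The sums are finite by \eqref{eq:FIR_param}. Each block component $\beta_k[t+1]$ is computed at site $p_\beta(k)$, and each $u_i[t]$ at site $p_u(i)$. Sensor $y_j[t]$ is sampled at $p_y(j)$.

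\textbf{Step 2 (message availability).} Consider a term $L_{ij}[q]y_j[t-q]$. If it is nonzero, then \eqref{eq:implementation_mask_rule} gives $d_{p_u(i),p_y(j)}\le q$. The value $y_j[t-q]$ is therefore delivered by time $t$, and early arrivals are buffered. Hardware gating \eqref{eq:hardware_gating} forces $\xi_j=\eta_i=1$, so the required devices are enabled.

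For $M_{ik}[q+1]\beta_k[t-q]$, a nonzero coefficient gives $d\le q$. The value $\beta_k[t-q]$ exists at $p_\beta(k)$ once it is computed, which is at step $t-q$, so it arrives by $t$.

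For $\beta_k[t+1]$, its computation must finish by the end of step $t$. The term $N_{kj}[q+1]$ needs $y_j[t-q]$, and the mask gives $d\le q+1$. The term $R_{k\ell}[q+2]$ needs $\beta_\ell[t-q]$, and the mask gives $d\le q+1$. Both arrive by time $t+1$, when $\beta_k[t+1]$ is formed, so the update is causal.

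Diagonal local terms use $d_{rr}=0$ and are free. The fixed $R[1]=I$ never appears in $\widetilde R^+$, so it needs no service. Any term with $d=\infty$ has a zero coefficient and needs no message.

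\textbf{Step 3 (step-size check).} I need to confirm that the timing convention matches the lag offsets in \eqref{eq:implementation_mask_rule}. The intended reading is that $\beta[t+1]$ may use data arriving up to time $t+1$, while $u[t]$ uses only data arriving by $t$. This is exactly the $-1$ shift in the $M$ and $R$ rows, combined with the extra $z$ in $\widetilde M,\widetilde N,\widetilde R^+$. I expect this bookkeeping to be the main obstacle, so I will verify it against \eqref{eq:realization_shifts} case by case.

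\textbf{Step 4 (realized controller).} Each service pair $(r,s)$ carries possibly many messages but is charged once, consistent with \eqref{eq:architecture_cost}. Hence only the selected resources are used. The nodewise computations together implement exactly \eqref{eq:ofsls_realization}. Eliminating $\beta$ gives $K=L-MR^{-1}N$. Because $\Psi\in\mathcal F_{\TFIR}$ satisfies \eqref{eq:ofsls_achievability}, the standard OF-SLS theorem (cited earlier) shows this realization internally stabilizes the plant and achieves closed-loop responses $\Psi$. Assumption~\ref{ass:plant}, which gives a strictly proper $P_{22}$, ensures well-posedness even with $L[0]\neq0$.
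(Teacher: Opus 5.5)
Your proposal is correct and takes essentially the same route as the paper's proof: you translate the realization shifts \eqref{eq:realization_shifts} into delivery deadlines (lag $q$ for the $u[t]$ terms, lag $q+1$ for the $\beta[t+1]$ terms), use the masks and hardware gates to show that every nonzero term has its message available, and then invoke elimination of $\beta$ together with the standard OF-SLS achievability result to obtain $K=L-MR^{-1}N$, internal stability, and the responses $\Psi$. Your coefficient-level check is more explicit than the paper's, but two points should be stated: availability of $\beta_k[t-q]$ rests on an induction on $t$, which the paper names explicitly, and the $M$- and $N$-gates in \eqref{eq:hardware_gating}, not only the $L$-gates, are what ensure that only enabled actuators and sensors are used.
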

\begin{proof}
A sample generated at \(\tau\) arrives by \(\tau+d_{rs}\). Thus \(d_{rs}\le q\)
makes it available to an input-update term with \(\tau=t-q\) at time \(t\), and
\(d_{rs}\le q+1\) makes it available to a realization-state update at \(t+1\).
Substitution of \eqref{eq:realization_shifts} gives
\eqref{eq:implementation_mask_rule}; induction on $t$ makes each required sample available
when consumed. Equation~\eqref{eq:hardware_gating} removes disabled-device
signals. Algebraic elimination of \(\beta\) and the OF-SLS achievability
identities give the stated controller, internal stability, and responses.
\end{proof}
Other distributed realizations induce corresponding communication constraints.

\subsection{Control Architecture Co-Design Problem via SLS}

For a budget \(B\ge0\), define the primary hard-budget problem.
\begin{problem}[FIR-SLS architecture co-design]
\label{prob:implementation_local_sls}
\begin{equation}
\label{eq:implementation_local_sls}
\begin{aligned}
V_B:=\inf_{d,\eta,\xi,\Psi}\quad
&J_{\mathrm{FIR}}(\Psi)\\
\st\quad
&(d,\eta,\xi)\in\Aadm,\
\Psi\in\mathcal F_{\TFIR}^{\mathrm{dep}}(d,\eta,\xi),\\
&J_{\mathrm{arch}}(d,\eta,\xi)\le B.
\end{aligned}
\end{equation}
\end{problem}
The scalarized alternative minimizes
\(J_{\mathrm{FIR}}+\lambda J_{\mathrm{arch}}\), \(\lambda\ge0\), over the same
deployment class.

\section{Binary-Cumulative MICP Reformulation}
\label{sec:binary_sls_reformulation}

\subsection{Finite-Dimensional Encoding of Communication Delays}
The one-hot/cumulative map is an elementary encoding of a finite service
choice. Its role is to expose lag-dependent availability to the four deployment
gates; Proposition~\ref{prop:deployment} supplies the implementation certificate.
For each ordered site pair, one-hot variables select an option in
\(\mathcal D_{rs}\), and cumulative binaries encode its lag-dependent
availability:
\begin{definition}[Binary cumulative service encoding]
\label{def:binary_encoding}
\begin{equation}
\label{eq:e_increment}
\begin{aligned}
&e_{rs,\nu}\in\{0,1\},\quad \nu\in\mathcal D_{rs},
\sum_{\nu\in\mathcal D_{rs}}e_{rs,\nu}=1,\\
&s_{rs,q}\in\{0,1\},\quad
s_{rs,q}=
\sum_{\substack{\nu\in\mathcal D_{rs}\setminus\{\infty\}\\ \nu\le q}}
e_{rs,\nu},
\quad q\in\Z_{[0,\Tlat]},\\
&s_{rs,q}\le s_{rs,q+1},
\quad q\in\Z_{[0,\Tlat-1]}.
\end{aligned}
\end{equation}
Thus \(s_{rs,q}=1\) exactly when the selected service is available by lag
\(q\), and every cumulative sequence is binary and monotone.
\end{definition}

For a selected option \(d_{rs}\), \eqref{eq:e_increment} gives
\(e_{rs,\nu}=\mathbf 1_{\{d_{rs}=\nu\}}\) and
\(s_{rs,q}=\mathbf 1_{\{d_{rs}\le q\}}\). Conversely, every \((e,s)\)
satisfying \eqref{eq:e_increment} uniquely recovers \(d_{rs}\) as the option
\(\nu\) with \(e_{rs,\nu}=1\). The resulting bijection and its inverse are
\[
\begin{aligned}
\mathcal T: (d,\eta,\xi,\Psi)&\mapsto(e(d),s(d),\eta,\xi,\Psi),\\
\mathcal T^{-1}: (e,s,\eta,\xi,\Psi)&\mapsto(d(e),\eta,\xi,\Psi),
\end{aligned}
\]
where \(d_{rs}(e)=\nu\Longleftrightarrow e_{rs,\nu}=1\). Equivalently,
\(d_{rs}=\min\{q:s_{rs,q}=1\}\), with \(d_{rs}=\infty\) for an all-zero
cumulative sequence.

For nonconsecutive service delays, \(s_{rs,q}\) remains constant between
available options; the one-hot equality restricts it to \(\mathcal D_{rs}\).

The one-hot variables give the complete binary architecture cost
\begin{equation}
\label{eq:arch_cost_finite}
\begin{aligned}
J_{\mathrm{arch}}^{\mathrm{bin}}(e,\eta,\xi)
=\sum_i c_i^{\mathrm a}\eta_i
+\sum_j c_j^{\mathrm s}\xi_j
+\sum_{r,s\in\mathcal V}
\sum_{\nu\in\mathcal D_{rs}}
c_{rs}^{\mathrm c}(\nu)e_{rs,\nu}.
\end{aligned}
\end{equation}
Here \(c_{rs}^{\mathrm c}(\infty)=0\), so \eqref{eq:arch_cost_finite} preserves
\(J_{\mathrm{arch}}\). Device activation is represented exclusively by
\((\eta,\xi)\).

\begin{definition}[Encoded admissible image]
\label{def:badm_truncated}
\[
\BadmT
:=
\{(e(d),s(d),\eta,\xi):\ (d,\eta,\xi)\in\Aadm\}.
\]
Thus \(\BadmT\) preserves the service-option membership, device, compatibility,
cardinality, and mandatory or forbidden-resource restrictions in \(\Aadm\); it
is mixed-integer linear whenever those restrictions are.
\end{definition}

The maximum communication delay \(\Tlat\) and response horizon \(\TFIR\) are
independent. Cumulative availability is evaluated at \(\Tlat\) when gating
later response coefficients.

\subsection{FIR-SLS MICP}

For each integer \(q\ge0\), write \(\bar q:=\min\{q,\Tlat\}\). With the
cumulative variables, the deadlines in \eqref{eq:implementation_mask_rule} are
equivalent to
\begin{equation}
\label{eq:loc_indicator_coupling}
\begin{aligned}
s_{p_u(i),p_y(j),\bar t}=0
&\Longrightarrow L_{ij}[t]=0,\\
s_{p_u(i),p_\beta(k),\overline{t-1}}=0
&\Longrightarrow M_{ik}[t]=0,\\
s_{p_\beta(k),p_y(j),\bar t}=0
&\Longrightarrow N_{kj}[t]=0,\\
s_{p_\beta(k),p_\beta(\ell),\overline{t-1}}=0
&\Longrightarrow R_{k\ell}[t]=0.
\end{aligned}
\end{equation}
The four rows apply for \(t\ge0\), \(t\ge1\), \(t\ge1\), and \(t\ge2\),
respectively, over the FIR horizon. The fixed \(R[1]=I\) is the identity
self-update and uses no directed communication service. These service gates are
imposed together with the independent hardware gates \eqref{eq:hardware_gating};
service availability and device activation remain separate decisions.

\begin{problem}[FIR-SLS hard-budget MICP]
\label{prob:finite_micp}
\begin{equation}
\label{eq:finite_micp}
\begin{aligned}
V_B^{\mathrm{MI}}
:=\inf_{\Psi,e,s,\eta,\xi}
&J_{\mathrm{FIR}}(\Psi)\\
\st
&\Psi\in\mathcal F_{\TFIR},\quad
(e,s,\eta,\xi)\in\BadmT,\\
&\eqref{eq:loc_indicator_coupling}\ \text{and}\
\eqref{eq:hardware_gating}\ \text{hold},
J_{\mathrm{arch}}^{\mathrm{bin}}(e,\eta,\xi)\le B.
\end{aligned}
\end{equation}
\end{problem}
Fixed binaries leave a convex FIR OF-SLS subproblem; \eqref{eq:J_FIR} is
quadratic. The scalarized alternative minimizes
\(J_{\mathrm{FIR}}+\lambda J_{\mathrm{arch}}^{\mathrm{bin}}\), \(\lambda\ge0\),
under the same constraints.

\begin{theorem}[Exact finite reformulation]
\label{thm:finite_exact}
Let \(J_{\mathrm{FIR}}\) be proper, closed, and convex on
\(\mathcal F_{\TFIR}\). Fix \(\mathcal V\), the site-assignment maps, \(\Tlat\),
\(\TFIR\), the finite service-option sets
\(\{\mathcal D_{rs}\}_{r,s\in\mathcal V}\), the admissible set \(\Aadm\), and
\(B\), and suppose \(\BadmT\) has an exact mixed-integer linear
description. Then \emph{(i)} Problem~\ref{prob:finite_micp} is an MICP;
\emph{(ii)} \(\mathcal T\) bijects the feasible sets of
Problems~\ref{prob:implementation_local_sls} and~\ref{prob:finite_micp};
\emph{(iii)} \(\mathcal T\) leaves \(\Psi\) and
\(J_{\mathrm{FIR}}(\Psi)\) unchanged and satisfies
\(J_{\mathrm{arch}}^{\mathrm{bin}}(e(d),\eta,\xi)
=J_{\mathrm{arch}}(d,\eta,\xi)\); and
\emph{(iv)} \(V_B^{\mathrm{MI}}=V_B\), and for every \(\lambda\ge0\) the
scalarized problems have the same infimum. Whenever a common infimum is
attained, \(\mathcal T\) maps the corresponding optimizer sets bijectively.
\end{theorem}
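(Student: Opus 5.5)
The plan is to prove the four claims in order. The central step is a lemma showing that, for each fixed deployment, the cumulative indicator gates \eqref{eq:loc_indicator_coupling} define the same set of responses as the deadline masks \eqref{eq:implementation_mask_rule}.

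\emph{Claim (i).} For every fixed binary vector \((e,s,\eta,\xi)\), each indicator implication in \eqref{eq:loc_indicator_coupling} and \eqref{eq:hardware_gating} is either vacuous or a linear equality fixing a coefficient to zero. The feasible \(\Psi\)-set is therefore a polyhedron intersected with the affine set \(\mathcal F_{\TFIR}\).

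Because \(\Psi\) lives in a finite-dimensional space, each gate has an exact convex-hull (perspective) or complementarity description. I will use a big-M model only if I can justify a bound. A safer route is to take the disjunctive or indicator representation as the definition of a mixed-integer convex set.

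Combined with the exact MILP description of \(\BadmT\), the linear budget row \(J^{\mathrm{bin}}_{\mathrm{arch}}\le B\), and a proper closed convex objective, this gives an MICP.

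\emph{Key lemma (the main obstacle).} For each ordered pair \((r,s)\) and each lag \(q\ge 0\), I will show
\[
s_{rs,\bar q}=1 \iff d_{rs}\le q .
\]
For \(q\le \Tlat\), this follows directly from \(s_{rs,q}=\ind{d_{rs}\le q}\) (Definition~\ref{def:binary_encoding}). For \(q>\Tlat\), finite options lie in \(\Z_{[0,\Tlat]}\), so \(d_{rs}\le q\) is equivalent to \(d_{rs}<\infty\), which is equivalent to \(s_{rs,\Tlat}=1\).

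Applying this with \(q=t\) and \(q=t-1\) on the stated index ranges shows that each row of \eqref{eq:loc_indicator_coupling} is the contrapositive-equivalent form of the matching row of \eqref{eq:implementation_mask_rule}. This includes the \(\infty\) case, where all coefficients are forced to zero. The hardware gates appear identically in both problems.

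The care needed here is exactly the clipping \(\bar q\) and the index ranges. In particular, \(t-1\ge 0\) holds for the \(M\), \(N\) and \(R\) rows, and \(R[1]=I\) is excluded from the gates.

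\emph{Claims (ii) and (iii).} Take \((d,\eta,\xi,\Psi)\) feasible for Problem~\ref{prob:implementation_local_sls}. By Definition~\ref{def:badm_truncated}, its image \(\mathcal T(\cdot)\) lies in \(\BadmT\). The lemma gives the gates, and \(\Psi\in\mathcal F_{\TFIR}\) carries over unchanged.

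For the cost identity, \(e_{rs,\nu}=\ind{d_{rs}=\nu}\) collapses each inner sum in \eqref{eq:arch_cost_finite} to \(c^{\mathrm c}_{rs}(d_{rs})\). Since \(c^{\mathrm c}_{rs}(\infty)=0\), the \(\infty\) case is included, and so \(J^{\mathrm{bin}}_{\mathrm{arch}}=J_{\mathrm{arch}}\) and the budget transfers.

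Conversely, take \((e,s,\eta,\xi,\Psi)\) feasible for Problem~\ref{prob:finite_micp}. Membership in \(\BadmT\) means it equals \((e(d),s(d),\eta,\xi)\) for some \((d,\eta,\xi)\in\Aadm\). This \(d\) is unique, by one-hotness and the cumulative identity (which fixes \(s\) given \(e\)), and it is exactly \(d(e)\). The lemma and the cost identity then give feasibility of \(\mathcal T^{-1}(\cdot)\).

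The maps are mutually inverse because \(d\mapsto e(d)\) is injective and \(s\) is determined by \(e\). By construction, \(\mathcal T\) acts as the identity on \(\Psi\), so \(J_{\mathrm{FIR}}\) is preserved.

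\emph{Claim (iv).} The map \(\mathcal T\) is an objective-preserving bijection between the feasible sets, so the infima coincide. This includes the value \(+\infty\) when both problems are infeasible.

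For the scalarized problems, the budget row is dropped, and the same bijection preserves \(J_{\mathrm{FIR}}+\lambda J_{\mathrm{arch}}\) by (iii). In both cases, a point is optimal in one problem exactly when its image attains the common value in the other. Hence \(\mathcal T\) restricts to a bijection between the optimizer sets.
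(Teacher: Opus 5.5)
Your proposal is correct and follows essentially the same route as the paper's proof. Part (i) rests on convexity once the binaries are fixed; part (ii) uses the equivalence $s_{rs,q}=1\Leftrightarrow d_{rs}\le q$ to match the indicator rows with the deadline masks; part (iii) collapses the one-hot cost sum; and part (iv) is an objective-preserving bijection. Two refinements beyond the paper: your explicit treatment of the clipped lag $\bar q$ for $q>\Tlat$ spells out a step the paper uses without comment, and for (i) the big-M-free exactness comes from a lifted perspective formulation or from the paper's ``convex once binaries are fixed'' reading, not from the plain convex hull of the unbounded indicator disjunction.
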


\begin{proof}
Fixed binary decisions leave a convex problem in the FIR OF-SLS coefficients,
so the exact mixed-integer linear description of \(\BadmT\) proves
\emph{(i)}. The map \(\mathcal T\) defined after
Definition~\ref{def:binary_encoding} bijects \(\Aadm\) and \(\BadmT\). Since
\(s_{rs,q}=1\Longleftrightarrow d_{rs}\le q\), the four indicator rows in
\eqref{eq:loc_indicator_coupling} are coefficientwise equivalent to
\eqref{eq:implementation_mask_rule}, while the hardware gates are unchanged;
this proves \emph{(ii)}. The map leaves \(\Psi\) unchanged, and
\eqref{eq:arch_cost_finite} gives \emph{(iii)}. These identities preserve the
budget constraint and scalarized objectives, giving the value and optimizer-set
claims in \emph{(iv)}.
\end{proof}

Theorem~\ref{thm:finite_exact} records the exact finite problem equivalence;
Proposition~\ref{prop:deployment} links the response constraints to
implementation with the selected devices and services. A zero
mixed-integer gap certifies the optimum for the fixed sites, assignments,
horizons, finite option sets, admissible set, and objective.

\subsection{QI Controller-Support Reference}
\label{subsec:qi_reference}
QI classifies controller-transfer support, while the four realization-message
families define the deployed implementation. Each fixed deployment induces the
controller delay
\begin{equation}
\label{eq:controller_delay_induced}
\delta^K_{ij}(d,\eta,\xi):=
\begin{cases}
d_{p_u(i),p_y(j)}, & \eta_i\xi_j=1,\\
\infty, & \eta_i\xi_j=0,
\end{cases}
\end{equation}
and the controller subspace
\begin{equation}
\label{eq:SkD}
\mathcal S_K(d,\eta,\xi)
:=\{K\in\Rp^{m\times p}:K_{ij}[t]=0\ \text{for }
0\le t<\delta^K_{ij}\}.
\end{equation}
Thus a disabled actuator or sensor gives a structurally zero controller row or
column, represented by \(\delta^K_{ij}=\infty\).

The subspace \(\mathcal S\subseteq\Rp^{m\times p}\) is quadratically
invariant under \(P_{22}\) if \(KP_{22}K\in\mathcal S\) for all
\(K\in\mathcal S\) \cite{rotkowitz2005}.
Define the plant propagation latency matrix
\(\Pi=[\pi_{r\ell}]\in(\Z_{\ge1}\cup\{\infty\})^{n_y\times n_u}\) by
\begin{equation}
\label{eq:Dp_entry}
\pi_{r\ell}:=
\inf\{t\in\Z_{\ge0}:\ (P_{22})_{r\ell}[t]\neq0\},
\end{equation}
where \((P_{22})_{r\ell}[t]\in\R^{p_r\times m_\ell}\) is the corresponding
sensor--actuator block coefficient. With \(\inf\emptyset:=\infty\), finite
\(\pi_{r\ell}\) are at least one. The delay-domain criterion of
\cite[Theorem~2]{rotkowitzSimpleConditionConvexity2005} gives QI if and only if
\begin{equation}
\label{eq:Dqi}
\delta^K_{ir}+\pi_{r\ell}+\delta^K_{\ell j}\ge\delta^K_{ij},
\quad
\forall\, i,\ell\in\Z_{[1,n_u]},\
\forall\, j,r\in\Z_{[1,n_y]}.
\end{equation}
The left-hand side is the delay of the indirect path
\(y_j\to u_\ell\to y_r\to u_i\); QI requires this indirect path not to deliver
information from \(y_j\) to \(u_i\) earlier than the direct controller-support
delay \(\delta^K_{ij}\). It therefore classifies the controller-delay pattern
induced by each fixed deployment. Under \eqref{eq:Dqi}, every achievable
\(\Psi\) and its associated internally stabilizing \(K=L-MR^{-1}N\) satisfy
\(K\in\mathcal S_K(d,\eta,\xi)\Longleftrightarrow
L\in\mathcal S_K(d,\eta,\xi)\).

For a budget \(B\), define the budget-feasible QI comparison family
\begin{equation}
\label{eq:qi_budget_family}
\mathcal Q_B:=
\{(d,\eta,\xi)\in\Aadm:
J_{\mathrm{arch}}(d,\eta,\xi)\le B,
\ \text{\eqref{eq:Dqi} holds}\}.
\end{equation}
Each member is re-synthesized by the same fixed-architecture problem:
\begin{equation}
\label{eq:qi_uniform_resynthesis}
V_{\mathrm{QI}}(d,\eta,\xi)
:=\inf_{\Psi\in\mathcal F_{\TFIR}^{\mathrm{dep}}(d,\eta,\xi)}
J_{\mathrm{FIR}}(\Psi),
\end{equation}
where \(\inf\emptyset:=+\infty\). All QI deployments use the same candidates,
assignments, option sets, costs, response gates, and hardware gates. The best
budget-feasible QI reference minimizes \eqref{eq:qi_uniform_resynthesis} over
\(\mathcal Q_B\); QI enforces \eqref{eq:SkD} through affine support on \(L\).

\section{Numerical Validation}
\label{sec:numerics}

\begin{figure}[t]
\centering
\includegraphics[width=\columnwidth]{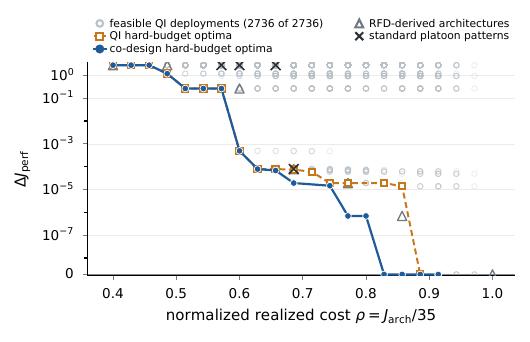}
\caption{Platoon architecture co-design at \(\TFIR=10\).}
\label{fig:platoon_budget_comparison}
\end{figure}

We use the homogeneous three-follower constant-time-headway platoon of
\cite{zhengStabilityScalabilityHomogeneous2016} to evaluate joint actuator,
sensor-package, and communication-service selection under a hard architecture
budget. The headway is \(h=0.6\,\mathrm{s}\), the actuator lag is
\(\tau=0.25\,\mathrm{s}\), and the sampling time is \(0.1\,\mathrm{s}\). For
\(i\in\Z_{[1,3]}\),
\(x_i=[e_i\ q_i\ \alpha_i]^\top\) collects spacing error, relative velocity,
and acceleration; \(\alpha_0\) is the leader acceleration, and \(\omega_i\) is
an actuation disturbance. The dynamics are
\(\dot e_i=q_i-h\alpha_i\),
\(\dot q_i=\alpha_{i-1}-\alpha_i\), and
\(\tau\dot\alpha_i=-\alpha_i+u_i+\omega_i\). Zero-order-hold discretization gives
\(n_x=9\), \(m=n_u=3\), \(n_y=7\), \(p=10\), \(n_w=4\), and
\(n_\zeta=11\), with
\begin{equation}
\label{eq:platoon_signals}
w=\col(\alpha_0,\omega_1,\omega_2,\omega_3),\qquad
y=\col(y_1,\ldots,y_7).
\end{equation}
The measurement blocks are \(y_1=\alpha_0\),
\(y_{2i}=\col(e_i,q_i)\), and \(y_{2i+1}=\alpha_i\) for
\(i\in\Z_{[1,3]}\). Thus \(n_y=7\) counts physical sensor packages, whereas \(p=10\) counts their
scalar outputs, with \((p_1,\ldots,p_7)=(1,2,1,2,1,2,1)\). Here
\(C_2=\col(0_{1\times9},I_9)\), \(D_{21}(1,1)=1\), and all other entries of
\(D_{21}\) are zero; no measurement-noise input is added. Define the selection
matrices \(E_e,E_q,E_\alpha\) by
\(E_ex=\col(e_1,e_2,e_3)\),
\(E_qx=\col(q_2-q_1,q_3-q_2)\), and
\(E_\alpha x=\col(\alpha_1,\alpha_2,\alpha_3)\). The regulated output is
\begin{equation}
\label{eq:platoon_performance}
\zeta=\col(\sqrt{10}E_ex,E_qx,\sqrt{0.5}E_\alpha x,\sqrt{0.1}u),
\end{equation}
which specifies \(C_1\) and \(D_{12}\); \(D_{11}=0\).

Sites are \(\{0,1,2,3\}\), with the leader at 0 and follower \(i\) at \(i\).
Three realization-state blocks use \(p_u(i)=p_\beta(i)=i\) and
\((p_y(1),\ldots,p_y(7))=(0,1,1,2,2,3,3)\). Following CACC instrumentation
\cite{deHaan2024Platooning}, the seven sensor packages are the leader
acceleration and, at each follower, one ranging/range-rate and one onboard
acceleration package; \(\xi\) uses this order. Every device has normalized unit
cost. Local services have zero delay and cost; nonlocal delay-one and delay-two
options cost three and two units, so faster delivery has higher cost. Let
\(d^{\mathrm F}:=[d_{rs}]_{r=1,2,3;\,s=0,1,2,3}\); its rows and columns
index follower destination sites and source sites, respectively. The three
leader and four directed adjacent-
follower services choose from \(\{1,2,\infty\}\); the two distance-two
services choose from \(\{2,\infty\}\). Thus there are \(3^7 2^2=8748\) admissible
service matrices and \(8748\,2^{10}=8{,}957{,}952\) joint device--service
deployments. All ten hardware binaries are free in the main hard-budget solves.

PBH tests find that only \(\eta=\mathbf 1_3\) is stabilizable and 16 of the 128
sensor subsets are detectable. The three ranging/range-rate packages are necessary; the
leader and follower acceleration packages are optional.

We solve Problem~\ref{prob:finite_micp} for every integer
\(B\in\{14,\ldots,35\}\) with \(\TFIR=10\), using Gurobi~13.0.1, Seed 23,
Threads 1, NumericFocus 3, and FeasibilityTol, OptimalityTol, IntFeasTol, and
MIPGap set to \(10^{-9}\). All device and service variables are
optimized under \(J_{\mathrm{arch}}\le B\). The dense minimum-delay deployment
has \(J_{\mathrm{arch}}^{\mathrm{dense}}=35\) and
\(J_{\mathrm{dense}}=4.9874267573\). For each evaluated deployment--response
pair, define \(\rho:=J_{\mathrm{arch}}(d,\eta,\xi)/
J_{\mathrm{arch}}^{\mathrm{dense}}\) and
\(\Delta J_{\mathrm{perf}}:=J_{\mathrm{FIR}}(\Psi)-J_{\mathrm{dense}}\).
Every reported co-design and QI budget optimum has zero mixed-integer gap and
passes an independent fixed-architecture OF-SLS residual check at
\(2\times10^{-8}\).

We compare with the QI family of Section~III-C, RFD-derived architectures, and
six standard platoon information-flow patterns from
\cite{zhengStabilityScalabilityHomogeneous2016}: predecessor following (PF),
predecessor--leader following (PLF), bidirectional (BD), bidirectional--leader
(BDL), two-predecessor following (TPF), and two-predecessor--leader following
(TPLF). Each standard pattern installs its prescribed site pairs at their
fastest available options, omits the remaining optional nonlocal pairs, and
reselects hardware under the same budget. RFD groups the FIR coefficients into actuator rows of \((M,L)\),
sensor-package columns of \((N,L)\), and optional directed services. Nested
service groups use incremental option-cost weights for successively faster
deadlines. We scan
\(\lambda\in\{0\}\cup\{a\,10^k:a\in\{1,3\},k\in\Z_{[-4,2]}\}\cup\{10^3\}\)
and thresholds \(\{10^k:k=-8,\ldots,0\}\cup\{5,10\}\). We map all 176 points to
their actual discrete cost, then re-synthesize each of the 12 unique discrete
deployments with its architecture fixed.

Figure~\ref{fig:platoon_budget_comparison} compares all classes under the same
FIR horizon, device candidates, service menus, and response gates. Each budget
optimum satisfies \(J_{\mathrm{arch}}\le B\), but is plotted at realized cost
\(\rho\); optima for the same budget may therefore have different abscissae.

At \(B=24\), hard-budget co-design selects \(\eta=\mathbf 1_3\),
\(\xi=(1,1,0,1,0,1,0)^\top\), and a cost split of \(3+4+17=24\) between
actuators, sensors, and services. It gives
\(\Delta J_{\mathrm{perf}}=1.92256\times10^{-5}\). The best QI design selects
\(\eta=\mathbf 1_3\), \(\xi=(1,1,1,1,0,1,0)^\top\), costs \(3+5+16=24\), and
gives \(7.41246\times10^{-5}\), a factor of \(3.86\). This comparison is
exhaustive within the PBH-admissible finite catalog: it contains \(139{,}968\)
device--service deployments, of which \(2736\) are QI-compatible. All 2736
fixed-architecture QPs return optimal solutions that pass the independent
residual check; the corresponding deployments are shown in
Fig.~\ref{fig:platoon_budget_comparison}.
The co-design and QI follower-destination service matrices are
\[
{\setlength{\arraycolsep}{3pt}
\begin{aligned}
d^{\mathrm F,\mathrm{co}}_{24}
&=\begin{bmatrix}
\infty&0&1&\infty\\
1&1&0&\infty\\
1&2&1&0
\end{bmatrix},
&
d^{\mathrm F,\mathrm{QI}}_{24}
&=\begin{bmatrix}
\infty&0&\infty&\infty\\
1&1&0&2\\
1&2&1&0
\end{bmatrix}.
\end{aligned}}
\]
With the sensor-block order in \eqref{eq:platoon_signals}, taking
\((i,r,\ell,j)=(1,4,2,1)\) gives
\(\delta^K_{1,4}+\pi_{4,2}+\delta^K_{2,1}=1+1+1
<\delta^K_{1,1}=\infty\). Hence the co-design optimum is non-QI by
\eqref{eq:Dqi}.

All 12 RFD-induced fixed-architecture QPs pass the residual check; seven are
non-dominated. RFD recovers the co-design deployment at \(B=14\); at \(B=24\),
its best loss, \(7.87080\times10^{-5}\), equals the best standard-pattern
reference (TPLF) but exceeds the co-design optimum.

Fixing all devices while retaining selectable services is infeasible at
\(B=14\); at \(B=24\), it gives
\(\Delta J_{\mathrm{perf}}=7.87080\times10^{-5}\), versus
\(1.92256\times10^{-5}\) with selectable hardware, and at \(B=35\) it reaches
\(J_{\mathrm{dense}}\) at cost 34. Thus sensor selection affects feasibility
and performance. Adding a delay-three nonlocal option of cost one leaves the
co-design optima unchanged at \(B=14,24,35\); at \(B=24\), the QI loss falls
from \(7.41246\times10^{-5}\) to \(6.82045\times10^{-5}\), still above the
co-design value. For \(\TFIR=8,12\) and \(B=14,24,35\), respectively,
\(J_{\mathrm{FIR}}\) changes from \((26.3237,11.71715,11.71714)\) to
\((3.46280,2.72241,2.72235)\), while the selected hardware and service matrices
remain unchanged over \(\TFIR\in\{8,10,12\}\). The code and numerical data are
archived in the versioned public release
\cite{zhouCoDeCALCSSReproduction2026}.

\section{Discussion and Conclusion}
\label{sec:discussion}

For discrete-time LTI plants, this paper presents a co-design framework that
jointly selects actuators, sensor packages, and communication services while
synthesizing OF-SLS responses under a hard architecture budget. Its main
technical result is an exact finite MICP reformulation for the prescribed
deployment class: cumulative binaries encode finite service choices, and the
response and device gates ensure that every feasible response admits a causal
implementation with the selected resources. At \(B=24\), the platoon optimum
retains the PBH-required actuators, removes optional sensors, and outperforms
the budget-feasible QI, standard-pattern, and RFD-derived alternatives.

Scaling the co-design to larger networks leads to a structured screening
problem: which candidate devices and communication services can be removed
without materially degrading closed-loop performance? Spatial decay in
networked control \cite{shinNearOptimalDistributed2023} motivates
graph-neighborhood and response-based screening. Reduced MICPs can then be
benchmarked against the full formulation on tractable instances to quantify
performance loss and computational savings. Future work will develop certified
screening conditions and hierarchical architecture search for larger networks.

\bibliographystyle{IEEEtran}
\bibliography{CoDeCA_LCSS_V2}

\end{document}